\newtheorem{theorem}{Theorem}[section]
\newtheorem{corollary}[theorem]{Corollary}
\newtheorem{lemma}[theorem]{Lemma}
\theoremstyle{definition}
\newtheorem{definition}[theorem]{Definition}
\newtheorem{construction}[theorem]{Construction}
\newcommand{\R}{\mathbb R}
\newcommand{\proj}{\mathbb{PR}^2}
\newcommand{\RNum}[1]{\uppercase\expandafter{\romannumeral #1\relax}}
\renewcommand\footnotemark{}
\begin{document}
\title{On the number of ordinary circles}
\author{Hossein Nassajian Mojarrad \and Frank de Zeeuw\footnote{\hspace{-20pt} Both authors supported by Swiss National Science Foundation Grants 200020-144531 and 200021-137574.}
}
\date{}
\maketitle

\begin{abstract}
We prove that any $n$ points in $\R^2$, not all on a line or circle, determine at least $\frac{1}{4}n^2-O(n)$ ordinary circles (circles containing exactly three of the $n$ points).
The main term of this bound is best possible for even $n$.
Our proof relies on a recent result of Green and Tao on ordinary lines.
\end{abstract}


\section{Introduction} 
The classical Sylvester-Gallai theorem states that any finite non-collinear point set in $\R^2$ spans at least one \emph{ordinary line} (a line containing exactly two of the points).
A more sophisticated statement is the Dirac-Motzkin conjecture, according to which every non-collinear set of $n>13$ points in $\R^2$ determines at least $n/2$ ordinary lines. 
This conjecture was proved in 2013 by Green and Tao \cite{GT} for all $n$ larger than some fixed threshold $N_{GT}$ (see Section \ref{sec:greentao}).

It is natural to ask the corresponding question for \emph{ordinary circles} (circles that contain exactly three of the given points);
see for instance Section 7.2 in \cite{BMP} or Chapter 6 of \cite{KW}.
Elliott \cite{E} introduced this question in 1967, and proved that any $n$ points, not all on a line or circle, determine at least\footnote{When we say that $f(n)$ is at least $g(n)-O(h(n))$, we mean that there is a constant $C$ such that for all $n$ we have $f(n)\geq g(n) - C\cdot h(n)$.} $\frac{2}{63}n^2-O(n)$ ordinary circles. He suggested, cautiously, that the optimal bound is $\frac{1}{6}n^2-O(n)$.
Elliott's result was improved by B\'alintov\'a and B\'alint \cite{BB} to\footnote{This bound can be found in a remark at the very end of \cite{BB}.} $\frac{11}{247}n^2-O(n)$ in 1994, and Zhang \cite{Z} obtained $\frac{1}{18}n^2-O(n)$ in 2011.

We use the result of Green and Tao \cite{GT} to prove the following theorem, providing an essentially tight lower bound for the number of ordinary circles determined by a large point set. It solves Problem 7.2.6 in \cite{BMP} (and disproves Elliott's suggested bound).

\begin{theorem}\label{thm:main}
If $n$ points in $\R^2$ are not all on a line or circle,
then they determine at least $\frac{1}{4}n^2-O(n)$ ordinary circles.
\end{theorem}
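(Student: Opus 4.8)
The plan is to turn circles into lines by inversion and then invoke the Green--Tao theorem. Fix one of the points, call it $p$, and apply the inversion $\iota$ centred at $p$ with radius $1$; write $q_1',\dots,q_{n-1}'$ for the images under $\iota$ of the remaining points. I would use the classical facts that $\iota$ maps a circle through $p$ to a line not through $p$, maps a line through $p$ to itself, and maps any circle or line missing $p$ to a circle. Hence a circle through $p$ and two further points $a,b$ is sent to the line through $a'$ and $b'$, and it is an ordinary circle (it meets the configuration in exactly $\{p,a,b\}$) exactly when that line contains none of the other $q_i'$. Grouping the two kinds of ordinary curve through $p$ together, I get the clean correspondence
\[
(\text{ordinary circles through }p)\ \sqcup\ (\text{3-point lines through }p)\ \longleftrightarrow\ (\text{ordinary lines of } \{q_1',\dots,q_{n-1}'\}),
\]
the first kind coming from ordinary lines of the image avoiding the location $p$, the second from those passing through it.

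Next I would apply Green--Tao to each image set. The hypothesis that the $n$ points do not all lie on one line or circle ensures that $\{q_i'\}$ is not collinear, since a collinear image would pull back to all $n$ points lying on a common line or circle through $p$; so the theorem of Section~\ref{sec:greentao} applies. The decisive point is parity: when $n$ is even the image has the \emph{odd} cardinality $n-1$, for which Green--Tao guarantee about $\tfrac34 n$ ordinary lines rather than the $\tfrac12 n$ available in the even case. Summing the correspondence over all $p$, and using that each ordinary circle and each $3$-point line is counted once per incident point, I would obtain
\[
3\,C + 3\,L \ \ge\ n\cdot 3\bigl\lfloor (n-1)/4\bigr\rfloor \ =\ \tfrac34 n^2 - O(n),
\]
where $C$ is the number of ordinary circles and $L$ the number of $3$-point lines; that is, $C + L \ge \tfrac14 n^2 - O(n)$. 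The factor $\tfrac34$ supplied by the odd Green--Tao bound is precisely what upgrades the naive $\tfrac16 n^2$ (coming from the even bound $\tfrac12 n$) to the target $\tfrac14 n^2$.

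The main obstacle is thus the term $L$: three-point lines inflate the ordinary-line count of the images without producing ordinary circles, and a priori $L$ can be of order $n^2$. I would handle this by a dichotomy. When $L=O(n)$ the inequality above already yields the theorem. When $L$ is large the configuration is forced to be structured, and here I would invoke the Green--Tao structure theorem (almost all points on a cubic, or on a conic together with a line, or on a single line): for each such structure one can either count ordinary circles directly or argue that the images carry a surplus of ordinary lines avoiding $p$ that compensates for $L$. A model case confirming the mechanism is a set on a conic other than a circle, where no three points are collinear, so $L=0$ and the bound $C \ge \tfrac14 n^2 - O(n)$ is immediate; I expect the extremal configurations for even $n$ to be of this type.

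Finally, the case of odd $n$ needs separate treatment, since then each image has even cardinality $n-1$ and Green--Tao only supplies $\tfrac12 n$ ordinary lines, giving just $\tfrac16 n^2$ from the summation. I would recover the weaker-but-sufficient $\tfrac14 n^2 - O(n)$ (no longer claimed to be tight, consistent with the statement) either by deleting a suitably chosen point to reduce to the even case while controlling the few ordinary circles through the deleted point, or again by appealing to the structure theorem to rule out the possibility that all $n$ images are simultaneously near-extremal.
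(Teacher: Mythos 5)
Your setup---inverting at every point, applying Green--Tao to each image set, and double counting---is exactly the skeleton of the paper's proof, and your correspondence between ordinary circles plus three-point lines through $p$ and ordinary lines of $I_p(P-p)$ is correct. But the step you defer (``when $L$ is large the configuration is forced to be structured, and \dots one can either count ordinary circles directly or argue that the images carry a surplus of ordinary lines avoiding $p$'') is not a loose end: it is where essentially all of the paper's work lies, and as stated your mechanism for it does not work. First, a large number of three-point lines does \emph{not} put you in the hypotheses of the Green--Tao structure theorem, which concerns sets with few \emph{ordinary} lines; a set can have $\Theta(n^2)$ three-point lines and $\Theta(n^2)$ ordinary lines simultaneously, so no structure follows from largeness of $L$ alone. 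Second, the configurations your inequality fails on are not exotic: take a projective image of the B\"or\"oczky set $X_{2m}$ lying in $\R^2$, i.e.\ $n/2$ points on a conic and $n/2$ on a line such that every chord through two conic points meets the line in a point of the set (compare Construction \ref{constr:linecircle}). There $L = \binom{n/2}{2}-O(n) = \Theta(n^2)$, so $C+L \ge \tfrac14 n^2 - O(n)$ gives no information about $C$, and your ``model case'' of points on a conic with $L=0$ is not the extremal situation---the near-extremal examples are precisely the ones with quadratically many three-point lines.

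What is actually needed is a \emph{per-point} statement: a lower bound on the number of ordinary lines of $I_p(P-p)$ that avoid the deleted location $p$. The paper makes this the central object (``non-$q$ ordinary lines'') and proves a refined structure theorem for it (Theorem \ref{thm:addrem}), which requires re-entering the Green--Tao analysis: tracking how adding/removing points to $X_{2m}$ or to cosets on cubics creates and destroys ordinary lines, and bounding the number of ordinary lines through any fixed point by tangent-line counts via polar curves and B\'ezout (Corollary \ref{cor:tangentlinebound}). Even with that theorem, the bad case---where some image set is a B\"or\"oczky example minus the center, giving only $\tfrac12 n$ usable lines---cannot simply be ruled out; the paper's Sections \ref{sec:even}--5 show by a delicate geometric argument (inverting at other points, using B\'ezout to identify the image curves, and counting tangents from a point) that this bad case can occur for at most $O(1)$ or about half of the centers, never for enough of them to drag the average below $\tfrac34 n$. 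Your odd-$n$ fallback has the same character: deleting a point does not reduce to the even case, because an ordinary circle of $P-p$ may be a four-point circle of $P$ through $p$, and there can be $\Theta(n^2)$ of those, so the bound for $P-p$ does not transfer to $P$. In short, the strategy is the right one, but the proposal postpones exactly the part of the argument that constitutes the paper's contribution, and the dichotomy offered in its place is not a valid substitute.
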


If $n$ is even, the main term $\frac{1}{4}n^2$ is best possible, as shown by Construction \ref{constr:even} below (which is based on a construction in \cite{Z}).
But for odd $n$, the best upper bound that we know for the main term is $\frac {3}{8}n^2$, provided by Construction \ref{constr:odd}.
It remains an open problem to determine the right coefficient for odd $n$.

We have not attempted to determine the linear term in the bound more precisely, as it depends on the threshold $N_{GT}$ in the Green-Tao theorem, which is currently quite large.
The steps in our proof would be easy to quantify, given a better threshold.

\begin{construction}\label{constr:even}
Let $n$ be even. 
Take any two concentric circles $C_1$ and $C_2$. 
Let $S_1$ and $S_2$ be the vertex sets of regular polygons of size $n/2$ on $C_1$ and $C_2$ that are ``aligned'' in the sense that their points lie at the same set of angles from the common center (see Figure \ref{fig:concentric}).
  
 Any ordinary circle determined by $S_1\cup S_2$ contains two points from one circle and one from the other.
Because of the symmetry of the configuration, 
given $p,q\in S_1$ and $r\in S_2$,
the circle spanned by $p,q,r$ passes through another point of $S_2$,
unless $r$ is equidistant from $p$ and $q$.
On the other hand, if $r\in S_2$ is equidistant from $p,q\in S_1$, then  $p,q,r$ determine an ordinary circle.

First suppose $n/2$ is odd.
Then all $\binom{n/2}{2}$ pairs of points from $S_1$ have exactly one equidistant point in $S_2$, which gives $\binom{n/2}{2}$ ordinary circles with two points from $S_1$.
Similarly, there are $\binom{n/2}{2}$ ordinary circles with two points from $S_2$.  
Thus $S_1\cup S_2$ determines $2\binom{n/2}{2} = \frac{1}{4}n^2-O(n)$ ordinary circles.

Now suppose $n/2$ is even.
Then there are $\frac{1}{2}\binom{n/2}{2}$ pairs of points from $S_1$ that have no equidistant point in $S_2$, so these give no ordinary circles. There are $\frac{1}{2}\binom{n/2}{2}$ pairs of points from $S_1$ that have exactly two equidistant points in $S_2$, so these give $\binom{n/2}{2}$ ordinary circles.
Counting these circles for $S_1$ and $S_2$ gives $2\binom{n/2}{2} = \frac{1}{4}n^2-O(n)$ ordinary circles.
\end{construction}

\begin{figure}[ht]\label{fig:concentric}
\centerline{
\includegraphics[width=0.4\textwidth]{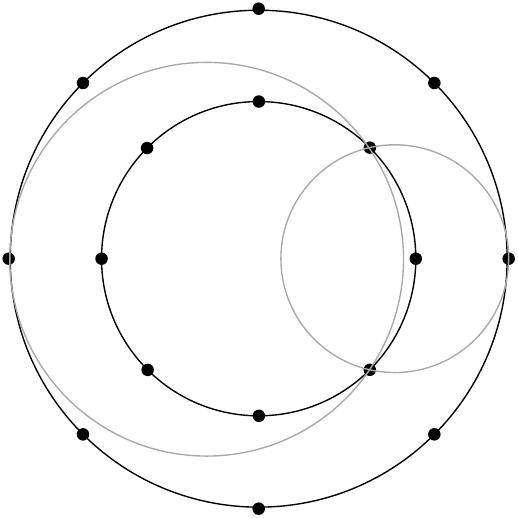}
\hspace{50pt}
\includegraphics[width=0.436\textwidth]{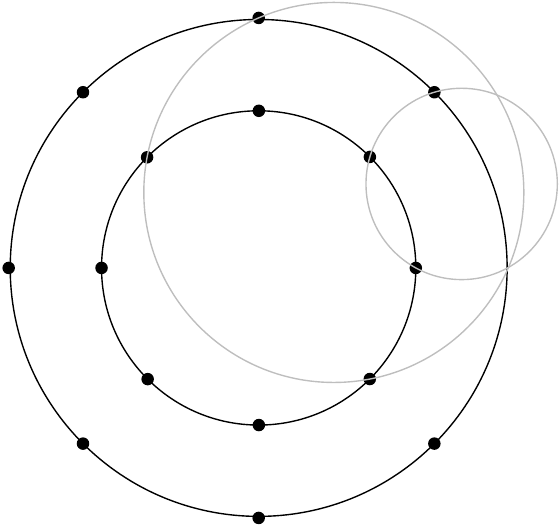}}
\caption{\small \sf Constructions \ref{constr:even} and \ref{constr:odd}, each with two ordinary circles.}
\end{figure}

\begin{construction}\label{constr:odd}
Let $n$ be odd. Take Construction \ref{constr:even} with $n+1$ points and remove an arbitrary point. 
The removal breaks the $O(n)$ ordinary circles that passed through the removed point, and it creates $\frac{1}{8}n^2-O(n)$ new ordinary circles.
Indeed, 
any four-point circle through the removed point becomes an ordinary circle.
The number of such circles is $\binom{n/2}{2}-O(n)$, since any pair of points on the circle not containing the removed point determines a circle through the removed point and another point,
except for the $O(n)$ pairs that are equidistant from the removed point, and possibly a pair that is collinear with the removed point.
Thus $S_1\cup S_2$ determines $\frac{3}{8}n^2-O(n)$ ordinary circles.
\end{construction}

\begin{construction}\label{constr:linecircle}
There are related constructions that lie on a line and a circle.
For each of the constructions above, 
pick a point on one of the circles that is not in the point set.
Apply circle inversion with this point as the center (see Section \ref{sec:inversion}).
The resulting point set lies on a line and a circle.
Every ordinary circle after the inversion corresponds to an ordinary circle or a three-point line before the inversion.
It is not hard to check that Construction \ref{constr:even} has no three-point lines, 
and Construction \ref{constr:odd} has one (the line through the removed point and its antepode).
Thus these constructions also have $\frac{1}{4}n^2-O(n)$ ordinary circles when $n$ is even and $\frac{3}{8}n^2-O(n)$ ordinary circles when $n$ is odd.
\end{construction}


\section{Tools}

In this section we introduce two basic ingredients of our arguments: circle inversion and tangent lines.


\subsection{Circle inversion}\label{sec:inversion}

A key tool in our proof is circle inversion. 
We quickly introduce it here to fix notation, and we summarize the properties that we use.
See for instance \cite{B} for more details and background.

It seems that inversion was first used in the context of the Sylvester-Gallai theorem by Motzkin \cite{M}.
He used it to prove that for any point set that is not on a line or circle, there is a line or circle containing exactly three of the points.
Inversion was also the main tool in 
each of the papers \cite{E, BB, Z} that established lower bounds on the number of ordinary circles.

\begin{definition}\label{def:inversion}
Let $p \in \R^2$ be a fixed point. The \emph{circle inversion} with center $p=(x_p,y_p)$ is the mapping $I_p:\R^2\backslash\{p\} \rightarrow \R^2\backslash\{p\}$ defined by
$$I_p(x,y) = \left(\frac{x-x_p}{(x-x_p)^2+(y-y_p)^2}+x_p, \frac{y-y_p}{(x-x_p)^2+(y-y_p)^2}+y_p\right)$$
for $(x,y)\neq p$.
\end{definition}

Technically, this is inversion in the circle of radius $1$ around $p$, 
but this radius plays no role in our proof, so we have fixed it to be $1$.
The natural setting for inversion is the \emph{Riemann sphere}, which can be viewed as $\R^2$ together with a point at infinity. 
However, because in our proof we also deal with the projective plane, which is $\R^2$ together with a \emph{line} at infinity, we keep the Riemann sphere out of sight to avoid confusion.

We recall the following basic properties of inversion.
We write $S-x = S\backslash\{x\}$.

\begin{itemize}
\item $I_p$ is its own inverse, i.e., 
$I_p(I_p(q)) = q$ for all $q\in \R^2\backslash \{p\}$;
\item If a line $L$ contains $p$, then $I_p$ maps $L-p$ to $L-p$. If a line $L'$ does not contain $p$, then $I_p$ maps $L'$ to $C-p$ for a circle $C$.
\item If a circle $C$ contains $p$, then $I_p$ maps $C-p$ to a line not containing $p$. If a circle $C'$ does not contain $p$, then $I_p$ maps $C'$ to a circle not containing $p$.
\end{itemize}

We also need the more general fact that inversion maps algebraic curves to algebraic curves.

\begin{lemma}\label{lem:invertcurve}
The inversion $I_p$ maps an algebraic curve $C$ of degree $d$ to an algebraic curve $C'$ of degree at most $2d$.
More precisely,
$C-p$ is mapped bijectively to $C'-p$.
\end{lemma}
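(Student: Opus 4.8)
The plan is to reduce to the case where $p$ is the origin and then read off the image curve by clearing denominators in the equation of $C$ pulled back through $I_p$. First I would dispose of the center: writing $T$ for the translation by $p$, one checks directly from Definition \ref{def:inversion} that $I_p = T\circ I_0\circ T^{-1}$, where $I_0$ is the inversion centred at the origin. Since $T$ is an affine isomorphism of degree $1$, it carries algebraic curves to algebraic curves of the same degree and is a bijection of $\R^2$, so it suffices to treat $I_0$. Hence I may assume $p=(0,0)$, so that $I_p(x,y)=\bigl(x/(x^2+y^2),\,y/(x^2+y^2)\bigr)$.

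Next, write $C=\{f=0\}$ with $f=\sum_{i+j\le d}a_{ij}x^iy^j$ of degree $d$, and define
\[
g(u,v)=(u^2+v^2)^d\,f\!\left(\frac{u}{u^2+v^2},\,\frac{v}{u^2+v^2}\right)=\sum_{i+j\le d}a_{ij}\,u^iv^j\,(u^2+v^2)^{\,d-i-j}.
\]
Because $i+j\le d$, every exponent $d-i-j$ is nonnegative, so $g$ is genuinely a polynomial; moreover each summand is homogeneous of degree $i+j+2(d-i-j)=2d-(i+j)\le 2d$, whence $\deg g\le 2d$. I then set $C'=\{g=0\}$.

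It remains to identify $C'$ with the image of $C$. For $(u,v)\ne p$ we have $u^2+v^2\ne 0$, so dividing the displayed identity by $(u^2+v^2)^d$ shows that $g(u,v)=0$ if and only if $f(I_p(u,v))=0$, i.e.\ $I_p(u,v)\in C$. Since $I_p$ is its own inverse and maps $\R^2-p$ bijectively to itself, it follows that $I_p$ restricts to a bijection from $C-p$ onto $C'-p$. Finally $g\not\equiv 0$, for otherwise $C'-p=I_p(C-p)$ would be all of $\R^2-p$, forcing $C=\R^2$ and contradicting $f\ne 0$. Thus $C'$ is an algebraic curve of degree at most $2d$, as claimed.

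The main obstacle here is conceptual rather than computational: the assertion is about $C-p\to C'-p$, not $C\to C'$. Clearing denominators by multiplying with $(u^2+v^2)^d$ can attach to $C'$ an extraneous factor vanishing at $p$ (and, over $\mathbb{C}$, at the two circular points at infinity), which is exactly why the bijection can be claimed only after deleting $p$. The two points one must check with care are that each power $(u^2+v^2)^{d-i-j}$ is an honest polynomial, which is guaranteed by $i+j\le d$, and that the passage between $f\circ I_p$ and $g$ is reversible, which holds precisely on $\R^2-p$ where $u^2+v^2\ne 0$.
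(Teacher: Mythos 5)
Your proof is correct and takes essentially the same route as the paper's: reduce to $p=0$, substitute the inversion formula into the defining equation $f=0$, clear denominators to obtain a polynomial $g$ of degree at most $2d$, and use that $I_p$ is an involution to get the bijection from $C-p$ onto $C'-p$. The only differences are cosmetic — you multiply by the fixed power $(u^2+v^2)^d$ and explicitly verify $g\not\equiv 0$ and the reduction to the origin, whereas the paper multiplies by the minimal power $k\le d$ that clears the denominator and leaves those routine points implicit.
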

\begin{proof}
We can assume that $p=0$.
Let $f(x,y)=0$ be the equation defining $C$.
Since $I_p$ is its own inverse,
every point $(x,y)\in I_p(C-p)$ satisfies the equation
\[ f\left(\frac{x}{x^2+y^2}, \frac{y}{x^2+y^2}\right)=0.\]
Multiplying this equation by $(x^2+y^2)^k$ for the integer $k\leq d$ that eliminates the denominator,
we get a polynomial equation $g(x,y)=0$ of degree at most $2k\leq 2d$.
On the other hand, given a point $q\neq 0$ satisfying $g(x,y)=0$, $r = I_p(q)$ satisfies $f(x,y)=0$, so $q$ lies in the image of $C-p$.
This shows that $C-p$ is mapped to $C'-p$.
\end{proof}

Whether or not $p$ lies on $C'$ depends on the behavior at infinity of $C$.
We just note that for a conic $C$, 
we have $p\in C'$ if $C$ is a parabola or hyperbola,
and $p\not\in C'$ if $C$ is an ellipse.
This follows easily by a continuity argument and the fact that parabolas and hyperbolas have a (real) point at infinity, while ellipses do not.


\subsection{Tangent lines} 

We need a few simple notions from the theory of algebraic curves.
We use the terminology from Fischer \cite{F}, and we temporarily use the projective plane $\proj$ because the definition of the polar curve is more natural there.
For a homogeneous polynomial $F\in \R[x,y,z]$, which we assume to be square-free, we write 
\[V(F) = \{[x:y:z]\in \proj: F(x,y,z)=0\}\] for the algebraic curve that it defines in $\proj$.

\begin{definition}[Polar curve] 
Let $C=V(F)$ be an algebraic curve in $\mathbb{P}\R^2$ not containing a line, 
and $p=[p_1:p_2:p_3] \in \mathbb{P}\R^2$ a fixed point. 
We define the homogeneous polynomial
\[D_{p}(F)=p_{1}{\frac {\partial F}{\partial x_1}}+p_{2}{\frac {\partial F}{\partial x_2}}+p_{3}{\frac {\partial F}{\partial x_3}},\] 
and we call $D_{p}(C)=V(D_{p}(F))$ the \emph{polar curve} of the curve $C$ with respect to $p$. 
\end{definition}

The proof of the following lemma can be found in \cite[Chapter 4]{F}.

\begin{lemma}
\label{lem:polar}
Let $C \subset \proj$ be an algebraic curve not containing a line and $p \in \mathbb{P}\R^2$ a fixed point.
The tangency points on $C$ of the tangent lines from $p$ to $C$ are contained in $C \cap D_{p}(C)$.
\end{lemma}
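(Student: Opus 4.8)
The plan is to reduce the claim to the standard gradient description of the tangent line at a smooth point, and then to dispatch the singular points separately. First I would recall that if $q=[q_1:q_2:q_3]$ is a smooth point of $C=V(F)$, then the projective tangent line to $C$ at $q$ is
\[ T_q = \Bigl\{ [x_1:x_2:x_3]\in\proj : \textstyle\sum_{i=1}^{3} \frac{\partial F}{\partial x_i}(q)\,x_i = 0 \Bigr\}. \]
Smoothness of $q$ guarantees that the gradient $\bigl(\frac{\partial F}{\partial x_1}(q),\frac{\partial F}{\partial x_2}(q),\frac{\partial F}{\partial x_3}(q)\bigr)$ is nonzero, so $T_q$ really is a line; and Euler's relation $\sum_i x_i\frac{\partial F}{\partial x_i}=(\deg F)\,F$, evaluated at $q$, gives $(\deg F)F(q)=0$, confirming that $q\in T_q$.

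I would then observe that the line $T_q$ passes through $p=[p_1:p_2:p_3]$ if and only if $\sum_{i=1}^3 p_i\frac{\partial F}{\partial x_i}(q)=0$, and the left-hand side is exactly $D_p(F)$ evaluated at $q$. Hence a smooth point $q\in C$ is the tangency point of a tangent line from $p$ precisely when $D_p(F)(q)=0$, that is, when $q\in C\cap D_p(C)$. This already handles all smooth tangency points, and in fact gives an equivalence (not merely a containment) among smooth points.

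It remains to treat the singular points of $C$, and this is the one place where I would be careful. At a singular point $q$ every partial derivative $\frac{\partial F}{\partial x_i}(q)$ vanishes, so $D_p(F)(q)=0$ automatically; thus every singular point of $C$ lies in $C\cap D_p(C)$, irrespective of any tangency interpretation. Combining the two cases, every tangency point on $C$ of a tangent line from $p$ lies in $C\cap D_p(C)$, as claimed. The hypothesis that $C$ contains no line is what keeps the statement meaningful: it ensures that $F$ has no linear factor, so that $D_p(F)$ does not collapse to a constant and $D_p(C)$ is a proper curve, leaving $C\cap D_p(C)$ a finite set rather than all of $C$. There is no deep obstacle here—the gradient formula for $T_q$ and Euler's relation are entirely standard (see \cite[Chapter 4]{F})—so the only real content is correctly separating the smooth and singular cases.
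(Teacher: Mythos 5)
Your proof is correct: the gradient equation for the tangent line at a smooth point, Euler's relation (which puts $q$ on that line and makes ``$p\in T_q$'' literally the condition $D_p(F)(q)=0$), and the automatic vanishing of all partials---hence of $D_p(F)$---at singular points give exactly the claimed containment, with the singular case dispatched so that no convention about tangency at such points matters. This is essentially the same standard argument that the paper itself does not write out but defers to via the citation \cite[Chapter 4]{F}; your closing aside about the no-line hypothesis is inessential (and slightly imprecise), since that hypothesis is not needed for the containment itself but rather for the B\'ezout step in Corollary~\ref{cor:tangentlinebound}, where one needs $C$ and $D_p(C)$ to have finite intersection.
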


\begin{corollary}
\label{cor:tangentlinebound}
Let $C$ be an algebraic curve of degree $d\geq 2$ in $\R^2$ and $p \in \R^2$ a fixed point.
At most $d(d-1)$ lines through $p$ are tangent to $C$ (or contained in $C$).
\end{corollary}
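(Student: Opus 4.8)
The plan is to pass to the projective plane, use Lemma \ref{lem:polar} to confine the tangency points to the intersection of the curve with its polar curve, and then bound that intersection by B\'ezout's theorem. Concretely, I would let $\bar C\subset \proj$ be the projective closure of $C$, defined by a square-free homogeneous $F$ of degree $d$, and identify $p$ with $[p_1:p_2:1]$. Every line through $p$ that we must count is either a linear component of $\bar C$ (a linear factor of $F$) or the tangent line to $\bar C$ at a smooth point, so I would split the count along these two types.

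First I would isolate the linear components by writing $F=G\cdot \ell_1\cdots \ell_m$, where the $\ell_i$ are the distinct linear factors and $G$ is square-free with no linear factor, so $e:=\deg G=d-m$. The lines through $p$ contained in $\bar C$ are exactly those $\ell_i$ passing through $p$, hence at most $m$ of them. For the genuine tangent lines, I would note that a smooth point of $\bar C$ lying on a linear component has that component as its tangent (already counted), so every remaining tangency point is a smooth point $q$ of $\bar C$ lying on $V(G)$, where the tangent to $\bar C$ equals the tangent to $V(G)$. Since $V(G)$ contains no line, Lemma \ref{lem:polar} places all such $q$ in $V(G)\cap V(D_p(G))$.

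The crucial step is to apply B\'ezout's theorem to $G$ and $D_p(G)$, which requires that they share no common component. Here I would argue that a common irreducible component would force all tangent lines of that component to pass through $p$, and in characteristic zero a reduced irreducible curve all of whose tangents pass through a fixed point must itself be a line through that point; as $G$ has no linear factor, this is impossible. This is exactly why the linear components were factored out, and it is the main obstacle to pin down carefully. Since $\deg D_p(G)=e-1$, B\'ezout then gives $|V(G)\cap V(D_p(G))|\le e(e-1)$ over $\mathbb{C}$, which bounds the real points a fortiori. Because distinct tangent lines through $p$ have distinct (smooth) tangency points on $V(G)$, there are at most $e(e-1)$ such tangent lines.

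Finally I would combine the two counts into at most $m+e(e-1)$, and close with the elementary inequality $m+e(e-1)\le (m+e)(m+e-1)=d(d-1)$, which holds whenever $d=m+e\ge 2$. The only genuinely delicate points are the no-common-component claim above and the bookkeeping that a smooth point of $\bar C$ lies on a unique component, so that ``tangent at a smooth point'' is unambiguous and the tangency-point map is injective; everything else is a direct application of the two lemmas and B\'ezout's theorem.
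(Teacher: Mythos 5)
Your proof is correct and follows essentially the same route as the paper: factor out the linear components, apply Lemma \ref{lem:polar} together with B\'ezout's theorem to the line-free part $G$ of degree $e$, and combine the two counts via $m+e(e-1)\le d(d-1)$. The only substantive difference is that you explicitly verify B\'ezout's no-common-component hypothesis (an irreducible curve all of whose tangents pass through a fixed point must be a line), a point the paper's proof applies silently, so your write-up is if anything slightly more careful than the original.
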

\begin{proof}
We can assume that the polynomial defining $C$ has at most $d-2$ linear factors, 
since otherwise the curve is a union of lines and the bound follows immediately.
Remove the linear factors and let $C'$ be the curve defined by the remaining polynomial; it does not contain a line and has degree $e\geq 2$.
Then $C$ is the union of $C'$ and $d-e$ lines.

Let $C''$ be the projective closure (see \cite[Chapter 2]{F}) of $C'$, which is a curve in $\proj$, also of degree $e$.
Applying B\'ezout's theorem (see \cite[Chapter 2]{F}) to $C''$ and $D_{p}(C'')$ gives 
\[|C'' \cap D_{p}(C'')| \leq \deg C''\cdot \deg D_{p}(C'')=e(e-1).\] 
It follows by Lemma \ref{lem:polar} that there are at most $e(e-1)$ tangency points on $C'$ from the tangent lines passing through $p$, so there are at most $e(e-1)$ tangent lines from $p$ to $C'$.

The same bound then holds in $\R^2$ for tangent lines from $p$ to $C$ that are not contained in $C$.
Together with the $d-e$ lines contained in $C$, this gives the bound $d(d-1)$.
\end{proof}


\section{Ordinary lines avoiding a fixed point} 
\label{sec:greentao}

In this section we introduce the structure theorem for sets with few ordinary lines of Green and Tao, 
and we deduce from it a structure theorem for sets with few ordinary lines \emph{avoiding some fixed point}.

\subsection{The Green-Tao structure theorem}

We use the notation $[x:y:z]$ for points in the projective plane $\proj$.
We consider points of the form $[x:y:0]$ as points on the line at infinity,
and we write $\mathbf{0}$ for the ``origin'' of $\proj$, by which we mean the point $[0:0:1]$.
We say that a set $A$ in $\R^2$ is \emph{projectively equivalent} to a set $B$ in $\proj$ if, after embedding $\R^2$ in $\proj$ by $(x,y)\mapsto [x:y:1]$, there is a bijective projective transformation that maps $A$ to $B$.
We denote this by $A\approx B$.

The following point sets are crucial in the proof of Green and Tao \cite{GT}, and will also be central to our proof.
In \cite{CM}, these constructions are ascribed to B\"or\"oczky. Further information as well as wonderful pictures can be found in \cite{GT}.

\begin{construction}[B{\"o}r{\"o}czky examples]\label{constr:bor}
For every $m \in \mathbb{N}$, we define 
\begin{align*}
X_{2m} = &\{ [ \cos\frac{2\pi j}{m}:\sin\frac{2\pi j}{m}:1] :0 \le j < m\} \cup \{[-\sin\frac{\pi j}{m}: \cos\frac{\pi j}{m}:0] :0 \le j < m \},
\end{align*}
which consists of $m$ points on a unit circle and $m$ points on the line at infinity in $\mathbb{P}\R^2$. 
A \emph{B{\"o}r{\"o}czky example} is either a set in $\mathbb{P}\R^2$ that is projectively equivalent to some $X_{2m}$,
or a set in $\R^2$ that is projectively equivalent to some $X_{2m}$.

A B{\"o}r{\"o}czky example $P$ consists of $m$ points on a conic $C$ and $m$ points on a line $L$.
As shown in \cite[Proposition 2.1]{GT},
every ordinary line of $P$ is a tangent line of $C$.
Moreover, for every point $p$ of $P$ on $C$, the tangent line to $C$ at $p$ is an ordinary line of $P$ (i.e., the point where the tangent line intersects $L$ is also in $P$).
This shows that a B{\"o}r{\"o}czky example with $2m$ points has exactly $m$ ordinary lines.
Another consequence is that a line through two points of $P$ on $C$ must intersect $L$ in a point of $P$, since otherwise it would be an ordinary line that is not tangent to $C$.
\end{construction}

We state the main result from Green and Tao \cite{GT}, which gives a detailed picture of the structure of any set with few ordinary lines.
We refer to \cite{GT} for definitions of the terms in type $(ii)$ of the theorem, including the group structure that is referred to.

\begin{theorem}[Green-Tao]
\label{thm:GT1}
For every $K\in\R$ there exists an $N_K \in \mathbb{N}$ such that the following holds for any set $P$ of $n \geq N_K$ points in $\proj$.
If $P$ determines fewer than $Kn$ ordinary lines, then after a projective transformation, $P$ differs by at most $O(K)$ points from a set of one of the following types:
\begin{itemize}
\item[$(i)$] $n-O(K)$ points on a line;
\item[$(ii)$] A coset $H\oplus g$ of a subgroup $H$ of $n\pm O(K)$ nonsingular points of an irreducible cubic curve, where $g$ is a point on the curve satisfying $3g\in H$;
\item[$(iii)$] The B{\"o}r{\"o}czky example $X_{2m}$, for some $m=\frac {n}{2}\pm O(K)$.
\end{itemize}
\end{theorem}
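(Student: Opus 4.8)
This is the deep external input of the paper, and in practice one invokes \cite{GT} rather than reproving it; but the strategy I would follow runs in four stages, turning the purely combinatorial hypothesis on ordinary lines into an algebraic statement about a cubic curve and then reading off the three configurations.

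First I would fix the counting regime. Passing to the projective dual and applying Euler's formula for planar subdivisions (Melchior's inequality) pins down the right threshold $N_K$ and shows that, for $n \geq N_K$, all but $O(K)$ points of $P$ are $3$-\emph{rich}, in the sense that almost every line through two points of $P$ meets a third. The first substantial step is then to produce a single cubic curve $\gamma$ containing all but $O(K)$ points of $P$. I would obtain $\gamma$ by an interpolation argument in the spirit of the polynomial method: a nonzero polynomial of controlled degree vanishes on a large subset of $P$, and the richness of the lines forces its zero set to contain a cubic component that carries almost all of $P$. The Cayley--Bacharach theorem is the natural tool here for controlling how collinear triples constrain the cubics through the set.

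With $\gamma$ fixed I would study $P' = P \cap \gamma$ via the group law $\oplus$ on the smooth locus of $\gamma$, under which three points are collinear exactly when they sum to the identity. Few ordinary lines now translates into the statement that for almost all pairs $p,q \in P'$ the third collinear point $\ominus(p \oplus q)$ again lies in $P'$; that is, $P'$ is an approximate coset. I would upgrade this to an exact description using a Freiman--Kneser-type rigidity theorem for the relevant one-dimensional real group (the circle group when $\gamma$ is a conic, a real torus or $\R$ when $\gamma$ is an irreducible cubic), concluding that $P'$ differs by $O(K)$ points from a genuine coset $H \oplus g$ with $3g \in H$.

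Finally I would split into cases according to the type of $\gamma$. An irreducible cubic yields type $(ii)$ at once; a conic together with a line, after matching the induced group structure to the configuration of Construction \ref{constr:bor}, yields the B\"or\"oczky alternative $(iii)$; and the degenerate cubics (three lines, a line with a double line, or the case where almost all points already lie on one line) collapse to type $(i)$, with the $O(K)$ error terms tracked through each reduction. The main obstacle, I expect, is twofold: converting the soft combinatorial hypothesis into the rigid conclusion that almost all points lie on one cubic, and then extracting a genuine coset with sharp error from the approximate one while handling every degenerate cubic type simultaneously. Of these the additive-combinatorial rigidity is the heaviest technical part, whereas the cubic-finding is the decisive conceptual bridge from combinatorics to algebra.
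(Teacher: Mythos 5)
The paper does not prove this theorem: it is the deep external input of the whole argument, quoted verbatim from Green and Tao \cite{GT}, and the authors explicitly defer to \cite{GT} even for the definitions of the terms in type $(ii)$. So there is no internal proof to compare your attempt against; the only meaningful comparison is with the proof in \cite{GT} itself.

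Measured against that, your outline is a broadly faithful description of the actual strategy: Green and Tao do start from Melchior's inequality in the dual, do show that all but $O(K)$ points lie on a (possibly reducible) cubic, do translate ``few ordinary lines'' into the statement that $P$ intersected with the cubic is an approximate coset for the group law on its smooth locus, and do finish by a case analysis in which the conic-plus-line case produces the B\"or\"oczky examples and degenerate cubics collapse to type $(i)$. But as a proof your text has genuine gaps rather than compressible details: the two steps you yourself flag as the main obstacles --- forcing almost all points onto a single cubic, and upgrading the approximate coset to an exact coset with $3g\in H$ and $O(K)$ error in every cubic type --- are the entire content of a long and delicate paper, and nothing in your sketch executes them. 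Two further inaccuracies are worth noting. First, the cubic is not found by a polynomial-method interpolation/dimension count; Green and Tao exploit the triangular structure of the dual arrangement coming from Melchior's inequality and propagate local cubic structure via Chasles/Cayley--Bacharach, and Melchior's inequality certainly does not ``pin down the threshold $N_K$'' (the enormous threshold arises from the later algebraic and additive steps). Second, the rigidity step is not an off-the-shelf Freiman--Kneser theorem but a bespoke analysis of near-subgroups of the specific one-dimensional groups ($\R/\mathbb{Z}$, $\R/\mathbb{Z}\times\mathbb{Z}/2\mathbb{Z}$, etc.) attached to each type of cubic. In short: correct roadmap of the cited proof, but not a proof, and for the purposes of this paper the correct move is exactly what the authors do --- cite \cite{GT} as a black box.
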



\subsection{\texorpdfstring{Non-$q$ ordinary lines}{Non-q ordinary lines}}

In the proof of Theorem \ref{thm:main}, we apply a circle inversion in a point of the point set, find an ordinary line in the inverted set, and then observe that this line is inverted back to a circle that also contains the center of inversion, so this circle is ordinary.
However, this only works if the ordinary line in the inverted set does not pass through the center of inversion. This leads to the following notion.

\begin{definition}
Let $P$ be a set of $n$ points in $\R^2$ and $q$  a fixed point in $\R^2$ that is not in $P$. 
We call an ordinary line of $P$ a \emph{non-$q$ ordinary line} if it does not pass through $q$. We denote the number of non-$q$ ordinary lines of $P$ by $ol_q(P)$.
\end{definition}

We now use Theorem \ref{thm:GT1}, as well as some observations from its proof in \cite{GT}, 
to deduce a structure theorem for sets with few non-$q$ ordinary lines. 
For brevity, we use the notation $S+x = S\cup \{x\}$ when we add a point to a set, 
and $S-x = S\backslash\{x\}$ when we remove a point from a set.

\begin{theorem}
\label{thm:addrem}
Suppose $P$ is a set of $n$ non-collinear points in $\R^2$. 
For a fixed point $q \notin P$, the number of non-$q$ ordinary lines of $P$ is at least $n-O(1)$, unless $P$ satisfies one of the following descriptions\footnote{When we write for instance $P\approx X_{n+1}-q$, we should really be writing $P\approx X_{n+1}-\varphi(q)$, where $\varphi$ is the projective transformation involved.
We are omitting this to avoid further complicating the already long statement of the theorem.}: 
\begin{itemize}
\item[$(a)$] $n$ is odd,  $P\approx X_{n+1}-q$, 
and  $ol_q(P)= \frac{1}{2}n-O(1)$;
\item[$(b)$] $n$ is odd,  $P\approx X_{n+1}-p$ with $p\neq q$, and $ol_q(P)= \frac{3}{4}n-O(1)$;
\item[$(c)$] $n$ is odd, $P\approx X_{n-1}+\mathbf{0}$, and $ol_q(P)= \frac{3}{4}n-O(1)$;
\item[$(d)$] $n$ is even, $P\approx X_n$, and $ol_q(P)=\frac{1}{2}n-O(1)$; 
\item[$(e)$] $n$ is even, $P\approx X_{n+2}-p-q$, and $ol_q(P)= \frac{3}{4}n-O(1)$;
\item[$(f)$] $n$ is even, 
$P\approx X_n+\mathbf{0}-q$, and $ol_q(P)= \frac{3}{4}n-O(1)$.
\end{itemize} 
\end{theorem}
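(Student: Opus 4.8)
The plan is to derive the theorem from the Green-Tao structure theorem (Theorem \ref{thm:GT1}) by relating non-$q$ ordinary lines both to the total number of ordinary lines and to tangent lines through $q$. First I would observe that any two distinct ordinary lines through $q$ meet only at $q$, and since $q\notin P$ they use disjoint pairs of points of $P$; hence at most $\lfloor n/2\rfloor$ ordinary lines of $P$ pass through $q$, so $ol_q(P)\geq ol(P)-n/2$, where $ol(P)$ denotes the total number of ordinary lines. Fixing a constant $K\geq 3/2$, if $ol(P)\geq Kn$ then $ol_q(P)\geq Kn-n/2\geq n$ and we are done; otherwise $ol(P)<Kn$ and, for $n\geq N_K$, Theorem \ref{thm:GT1} applies, so after a projective transformation (which carries $q$ along, suppressed as in the footnote) $P$ differs by $O(1)$ points from a set of type $(i)$, $(ii)$, or $(iii)$.

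In the nearly collinear case $(i)$, $n-O(1)$ points of $P$ lie on a line $\Lambda$ and $O(1)$ lie off it; since $P$ is non-collinear there is a point $s\in P\setminus\Lambda$. The $n-O(1)$ lines joining $s$ to the points of $P$ on $\Lambda$ are distinct and each meets $\Lambda$ only once, so such a line fails to be ordinary only when it passes through one of the $O(1)$ off-$\Lambda$ points; thus all but $O(1)$ of them are ordinary, and at most one passes through $q$, giving $ol_q(P)\geq n-O(1)$. In the cubic case $(ii)$, the group law shows that a line through two points of the coset meets the cubic again in a third coset point unless it is tangent there; hence, up to $O(1)$ exceptions, the ordinary lines of $P$ are exactly the tangent lines to the cubic at its points, of which there are $n-O(1)$. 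By Corollary \ref{cor:tangentlinebound} at most $3\cdot 2=6$ of these pass through $q$, so again $ol_q(P)\geq n-O(1)$.

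The heart of the argument is the B{\"o}r{\"o}czky case $(iii)$, where $P=(X_{2m}\setminus R)\cup A$ for sets $R\subseteq X_{2m}$ and $A$ with $|R|,|A|=O(1)$ and $2m=n\pm O(1)$, the set $X_{2m}$ consisting of $m$ points on a conic $C$ and $m$ points on a line $L$ as in Construction \ref{constr:bor}. Here I would compute $ol_q(P)$ modification by modification. The surviving original tangent lines contribute $m-O(1)$ non-$q$ ordinary lines. For the new ordinary lines I would establish the counts: deleting a point of $C$ turns the $m-1$ secants through it into ordinary lines; deleting a point of $L$ creates about $m/2$ of them; adding a point other than the center $\mathbf 0$ of $C$ creates about $2m$; while adding $\mathbf 0$ creates only about $m/2$ when $m$ is even (and about $2m$ when $m$ is odd). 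All of these new lines pass through the modified point, so they count toward $ol_q(P)$ unless that point is $q$, and at most $O(1)$ of them also pass through $q$. Writing $\sigma$ for the resulting number of new non-$q$ ordinary lines, one obtains $ol_q(P)=m+\sigma-O(1)$.

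Since every productive modification except a single ``cheap'' one contributes at least $m-O(1)$ to $\sigma$, the bound $ol_q(P)=m+\sigma-O(1)\geq 2m-O(1)=n-O(1)$ holds unless the modifications reduce to at most one of \{deleting an $L$-point $\neq q$, adding $\mathbf 0$ with $m$ even\}, together with possibly deleting $q$. Enumerating these minimal possibilities reproduces exactly the list $(a)$--$(f)$: the cases $\sigma=0$ give the pure example $X_n$ and $X_{n+1}-q$ with $ol_q=\frac12 n-O(1)$, while the cases $\sigma=\frac12 m$ give the four configurations $X_{n+1}-p$, $X_{n-1}+\mathbf 0$, $X_{n+2}-p-q$ and $X_n+\mathbf 0-q$ with $ol_q=\frac34 n-O(1)$. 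I expect the main obstacle to be precisely this last case: one must pin down the exact number of ordinary lines created by each deletion and addition in $X_{2m}$ (which is where the parity of $m$ enters and is absorbed into the $O(1)$ term), verify that $\mathbf 0$ is the only point whose addition is cheap and that deletions of $C$-points are never cheap, and check that all interaction terms among the $O(1)$ modified points and the point $q$ are genuinely $O(1)$ -- facts that rest on the detailed structure of $X_{2m}$ and on observations from the proof in \cite{GT}.
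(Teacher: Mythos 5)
Your proposal follows essentially the same route as the paper's proof: reduce to the Green--Tao structure theorem by bounding the number of ordinary lines through $q$, dispose of types $(i)$ and $(ii)$ using the tangent-line bound of Corollary \ref{cor:tangentlinebound}, and in the B\"or\"oczky case count the ordinary lines created and broken by each added or removed point, isolating the ``cheap'' modifications (deleting a point of $L$, adding $\mathbf{0}$, deleting $q$ itself) to arrive at exactly the list $(a)$--$(f)$. One small slip, harmless to the argument: adding a point of $L\setminus X_{2m}$ creates only about $m$ (not $2m$) new ordinary lines, since the lines joining it to the other points at infinity all coincide with $L$; but because your enumeration only uses the threshold $m-O(1)$ to separate cheap from non-cheap modifications, the conclusion is unaffected.
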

\begin{proof}
We can assume that $P$ has less than $2n$ ordinary lines,
since otherwise the the fact that there are less than $n$ ordinary lines containing $q$ implies that there are at least $n$ non-$q$ ordinary lines.
By adjusting the constants in the $O(1)$ terms, 
we can ensure that $n$ is large enough to let us apply Theorem \ref{thm:GT1}.
We apply Theorem \ref{thm:GT1} with $K=2$ and deduce that after a projective transformation, $P$ differs by at most $O(1)$ points from a set of one of the three types in Theorem \ref{thm:GT1}.

If $P$ is type $(i)$, then it has $n-O(1)$ points on a line $L$, but there is a point $p\in P\setminus L$. 
Then there are $n-O(1)$ lines through $p$ and a point of $P$ on $L$.
At most $O(1)$ of these pass through a third point of $P$, so $P$ has $n-O(1)$ ordinary lines containing $p$.
At most one of them can pass through $q$,
which implies $ol_q(P)\ge n-O(1)$ in this case.

If $P$ is type $(ii)$,
then it differs by $O(1)$ points from a coset $H$ of a subgroup\footnote{We refer to \cite{GT} for definitions, because this paragraph is the only point in our paper where we consider the group structure on an irreducible cubic.} of an irreducible cubic $C$.
Moreover, it can be deduced from \cite{GT}
that $P$ has at least $n-O(1)$ ordinary lines that are tangent lines to $C$.
Specifically, in \cite[Proposition 2.6]{GT} it is shown that for a coset $H$ of a subgroup of $C$, the ordinary lines are exactly the tangent lines to $C$ at $|H|-O(1)$ points of the coset.
When a point is added to or removed from $H$ to obtain $P$, 
an ordinary line may be broken,
either because a third point is added or because one of the two points is removed.
However, adding or removing a point breaks at most $O(1)$ of the ordinary lines that are tangent to $C$, 
since by Corollary \ref{cor:tangentlinebound} a fixed point is contained in at most six tangent lines to a cubic curve.
We have established that $P$ has at least $n-O(1)$ ordinary lines that are tangent lines to the curve $C$.
Again by Corollary \ref{cor:tangentlinebound}, 
at most six of these lines pass through $q$,
which shows that $P$ has at least $n-O(1)$ non-$q$ ordinary lines.

Finally, we treat the case where $P$ is type $(iii)$. 
Since ordinary lines (and non-$q$ ordinary lines) are preserved under projective transformations, 
we can assume that $P$ itself differs by at most $O(1)$ points from $X_{2m}$. 
First consider a B{\"o}r{\"o}czky example $X_{2m}$, consisting of $m$ points on a unit circle $S$ and $m$ points on the line $L$ at infinity.
As mentioned, $X_{2m}$ has $m$ ordinary lines, 
all of which are tangent lines to $S$. 

If we add to $X_{2m}$ a point $p$ other than the origin, 
then at least $m-O(1)$ ordinary lines are created, 
all of which pass through $p$.
For a point not on $L$, this is \cite[Corollary 7.6]{GT}.
For a point on $L$, it follows from the fact that any line through two points of $X_{2m}\cap S$ intersects $L$ in a third point of $X_{2m}$,
which implies that any line through $p\in L\backslash X_{2m}$ and a point of $X_{2m}\cap S$ contains no other point of $X_{2m}$.
By Corollary \ref{cor:tangentlinebound}, $p$ is contained in at most two of the ordinary lines of $X_{2m}$ (which are tangent to $S$), 
so adding $p$ breaks at most two of the ordinary lines of $X_{2m}$.
Therefore, $X_{2m}+p$ has at least $2m-O(1)$ ordinary lines, 
of which  $m-O(1)$ are tangent lines to $S$,
 and $m-O(1)$ pass through $p$. 
 By Corollary \ref{cor:tangentlinebound}, at most two of the tangent lines pass through $q$, 
 and assuming $p\neq q$, at most one of the lines through $p$ hits $q$.
 Thus
$$ol_q( X_{2m}+p)\ge 2m-O(1).$$
Since $q\not\in P$, $q$ itself is not one of the points that were added to $X_{2m}$ to obtain $P$.
Thus we have covered the cases where a single point other than the origin is added.

Next we consider what happens if we remove a point from $X_{2m}$.
Since the ordinary lines of $X_{2m}$ are tangent lines to $S$, 
and since through any point there are at most two such tangent lines by Corollary \ref{cor:tangentlinebound},
removing a point from $X_{2m}$ breaks at most two ordinary lines.
To see how many ordinary lines are created by removing a point of $X_{2m}$,
observe that at least $\frac{1}{2}m-O(1)$ of the lines through a point of $X_{2m}$ contain three points of $X_{2m}$.
Indeed, a point on $X_{2m}\cap L$ is contained in at least $\frac{1}{2}(m-2)$ lines that contain two points of $X_{2m}\cap S$,
while a point of $X_{2m}\cap S$ is contained in at least $m-1$ lines through another point of $X_{2m}\cap S$, each of which intersects $L$ in a third point of $X_{2m}$.
It follows that removing a point creates at least $\frac{1}{2}m-O(1)$ ordinary lines, all of which pass through the removed point. 
Note that a removed point could be $q$ itself, in which case the new ordinary lines are not non-$q$ ordinary lines.

Now consider adding or removing several points.
The above shows that after we add one point $p$ (other than the origin), there are $2m-O(1)$ non-$q$ ordinary lines, which are either tangent to $S$ or pass through $p$.
Any other point $r$ lies on at most two tangent lines to $S$ and on one line through $q$, 
so adding or removing any other point breaks $O(1)$ non-$q$ ordinary lines.
Similarly, if  we remove two points (not including $q$), 
then the number of non-$q$ ordinary lines is at least
$$\left(m-O(1)\right)+2\cdot\left(\frac{1}{2}m-O(1)\right)=2m-O(1).$$
Any subsequent removal breaks at most $O(1)$ of these ordinary lines.

If we add the origin (which is the center of $S$) to $X_{2m}$, no ordinary lines are broken, since the origin is not contained in any tangent line to $S$.
By \cite[Proposition 2.1]{GT}, adding the origin creates at least $\frac{1}{2}m-O(1)$ ordinary lines, all of which pass through the origin.
Thus if we add the origin  and remove at least one point other than $q$, 
we again get at least $2m-O(1)$ non-$q$ ordinary lines. 

As a result, if $P$ has less than $2m-O(1)=n-O(1)$ non-$q$ ordinary lines, 
it must be obtained from $X_{2m}$ by:
making no change (type $(d)$); 
removing $q$ (type $(a)$);
removing a point other than $q$ (type $(b)$);
removing two points, one of which equals $q$ (type $(e)$);
adding the origin (type $(c)$);
or adding the origin and removing $q$ (type $(f)$).
For each case, the parity of $n$ then follows from the fact that $X_{2m}$ has an even number of points, 
and the number of non-$q$ ordinary lines follows from the observations above.
\end{proof}


\section{Even point sets}\label{sec:even}

In this section we prove Theorem \ref{thm:main} for point sets of even size; point sets of odd size are treated in the next section.
Let $n$ be an even positive integer and suppose $P$ is a set of $n$ points in $\mathbb{R}^2$ that are not contained in a line or a circle. 
For every point $p\in P$, we apply the inversion $I_p$, and we consider the non-$p$ ordinary lines of $I_p(P-p)$, 
which correspond to ordinary circles of $P$.
By Theorem \ref{thm:addrem}, we have the lower bound $ol_p(I_p(P-p))\ge \frac{1}{2}n -O(1)$ for all $p\in P$, and this bound can be attained for sets of type $(a)$.
However, the resulting lower bound on ordinary circles would be $\frac{1}{6}n^2 - O(n)$, which is not optimal.
To obtain a better overall bound, we will use a careful case analysis to show that it is not possible to obtain sets of type $(a)$ after all inversions.

Let $oc(P)$ be the total number of ordinary circles of $P$ and $oc_p(P)$ the number of ordinary circles of $P$ containing $p$. 

\bigskip\noindent
{\bf Case I:} {\it For every $p \in P$, we have $ol_p(I_p(P-p))\ge \frac {3}{4}n -O(1)$.}\\
Every non-$p$ ordinary line of $I_p(P-p)$ corresponds to an ordinary circle containing $p$.
Since each ordinary circle contains exactly three points of $P$, by double counting we have:
\begin{align}\label{eq:allthreefourth}
3oc(P)
=\sum_{p \in P} oc_p(P) 
\geq n\cdot\left(\frac{3}{4}n-O(1)\right)
=\frac{3}{4}n^2-O(n).
\end{align}
This gives the desired bound in this case.

\bigskip\noindent
{\bf Case II:} {\it  There exists $q \in P$ for which we do not have $ol_q(I_q(P-q))\geq \frac {3}{4}n -O(1)$.}\\
In this case we will prove that roughly half the points give at least $n-O(1)$ ordinary circles each.
Together with the weaker overall bound $\frac{1}{2}n-O(1)$ for the other points, this still leads to the desired total.

Since $|I_q(P-q)|=n-1$ is odd, 
the set $I_q(P-q)$ must be type $(a)$ by Theorem \ref{thm:addrem}, so $ol_q(I_q(P-q))=\frac{1}{2}n-O(1)$,
and $I_q(P-q)$ is a B\"or\"oczky example 
minus the point $q$.
Hence the points of $I_q(P-q)+q$ lie on a line $L$ and a conic $C$ (and the line and conic are disjoint).
We distinguish between the subcase where $q\in C$ and the subcase where $q\in L$.

\bigskip\noindent
{\bf Subcase II(a):} {\it The point $q$ lies on the conic $C$.}\\
The line $L$ contains $\frac{1}{2}n$ points of $I_q(P-q)$ and the conic $C$ contains $\frac{1}{2}n-1$ points of $I_q(P-q)$ (not counting $q$).
Applying $I_q$ again (so reversing the inversion), the line $L$ is mapped to a circle $S$, 
which contains $\frac{1}{2}n+1$ points of $P$ (including $q$).
By Lemma \ref{lem:invertcurve}, the conic\footnote{For convenience, we refer to a set like $C-p$ as a curve, even though it may actually be a curve minus a point (the center of inversion); otherwise we would need a lot of unpleasant notation.} $C-p$ is mapped to an algebraic curve $C'$ of degree at most $4$.
The circle $S$ and the curve $C'$ are disjoint or intersect only in $q$.
As remarked after Lemma \ref{lem:invertcurve}, 
if $C$ is an ellipse, then $C'$ does not contain $q$, 
and thus contains $\frac{1}{2}n-1$ points of $P$;
if $C$ is a parabola or hyperbola,
then $C'$ does contain $q$, 
and thus contains $\frac{1}{2}n$ points of $P$.

We consider $p\in P\cap C'$, not equal to $q$, and apply $I_p$.
The set $I_p(P-p)$ has $n-1$ points,
with $\frac{1}{2}n+1$  on the circle $I_p(S)$ and $\frac{1}{2}n-2$ or $\frac{1}{2}n-1$ points (excluding $p$) on the algebraic curve $I_p(C'-p)$ of degree at most $8$.
If $I_p(P-p)$ were one of the types in Theorem \ref{thm:addrem}, then the two curves in those types (the line and conic of $X_{2m}$) would have to equal the two curves $I_p(S)$ and $I_p(C'-p)$, 
since by B\'ezout's inequality, any two of these curves intersect in at most 16 points, and $n$ is assumed to be much larger.
It then follows that $I_p(P-p)$ cannot be one of the three types of odd point sets listed in Theorem \ref{thm:addrem},
since in each of those the difference between the number of points on the two curves is $1$,
while here this difference is $3$ or $2$.
Hence the number of non-$p$ ordinary lines of $I_p(P-p)$ is at least $n-O(1)$. 

For points $r\in P\cap S$, Theorem \ref{thm:addrem} still gives us $oc_r(P) \geq \frac{1}{2}n-O(1)$.
Thus the total number of ordinary circles satisfies
\begin{align}\label{eq:halfnandn}
\begin{split}
3oc(P)&\geq\sum_{r \in P\cap S} oc_r(P)+\sum_{p \in P\cap (C'-q)} oc_p(P)\\
&\ge \left(\frac{1}{2}n+1\right) \left(\frac{1}{2}n-O(1)\right)+\left(\frac{1}{2}n-1\right) \bigg(n-O(1)\bigg)=\frac{3}{4}n^2-O(n).
\end{split}
\end{align}

\bigskip\noindent
{\bf Subcase II(b):} {\it The point $q$ lies on the line $L$.}\\
In this case, $P$ lies on the line $L$ (note that now $I_q(L) = L$) and an algebraic curve $C''$ of degree at most $4$, with $L$ and $C''$ disjoint or intersecting only in $q$. 
The line $L$ contains $\frac{1}{2}n$ points of $P$ 
and the curve $C''$ contains $\frac{1}{2}n$ or $\frac{1}{2}n+1$ points of $P$.
We will show that for most points $r\in P\cap C''$ we have $oc_r(P) \geq n-O(1)$, which will again lead to the desired total.

Suppose that for a point $r\in P\cap C''$, not equal to $q$, we do not have $ol_r\left(I_r(P-r)\right) \geq n-O(1)$, 
so $I_r(P-r)$ must be one of the types in Theorem \ref{thm:addrem}.
The set $I_r(L)$ is a circle and $I_r(C''-r)$ is an algebraic curve of degree at most $8$.
If these curves were not disjoint, 
then $I_r(P-r)$ could not be any of the types in Theorem \ref{thm:addrem},
so $I_r(L)$ and $I_r(C''-r)$ are disjoint.
In particular, this implies that $C''$ contains $\frac{1}{2}n$ points of $P$.
Therefore, the curve $I_r(C''-r)$ contains $\frac{1}{2}n-1$ points,
and the circle $I_r(L)$ contains $\frac{1}{2}n$ points of $I_r(P-r)$ (not counting $r$).
Note that $r$ and $I_r(q)$ both lie on the circle $I_r(L)$.

By Theorem \ref{thm:addrem},
$I_r(P-r)$ must be type $(a)$, $(b)$, or $(c)$, 
with the two curves in each type equalling $I_r(L)$ and $I_r(C''-r)$.
It cannot be type $(a)$, since $r$ lies on the curve with more points, so it cannot be equal to the removed point.
The set $I_r(P-r)$ also cannot be type $(c)$, 
since $I_r(P-r)$ has no points outside the two curves.
Thus it is type $(b)$, 
so $I_{r}(P-r)$ is projectively equivalent to $X_n$ minus a point not equal to $r$.
Note that $I_r(L)$ is not a line, 
so $I_{r}(C''-r)$ must be the line of the B{\"o}r{\"o}czky example, 
which tells us that $C''$ is in fact a circle.
Also note that, since $r$ is not part of the B{\"o}r{\"o}czky example, the line spanned by $r$ and $I_r(q)$ does not intersect the line $I_{r}(C''-r)$ in a point of $I_{r}(P-r)$, since otherwise it would be an ordinary line in the B{\"o}r{\"o}czky example that is not tangent to the conic.
In terms of $P$, this means that the line spanned by $r$ and $q$ does not intersect the circle $C''$ in a point of $P$ other than $r$. 

Recall that, under the assumption of Case II, the set $I_q(P-q)+q$ is a B\"or\"oczky example,
with its points on the line $L$ and the conic $C$.
As explained in Construction \ref{constr:bor}, 
a B\"or\"oczky example has the following feature.
For every $s\in C\cap I_q(P-q)$, the line spanned by $s$ and $q\in L$ is either tangent to the conic $C$ at $s$, 
or intersects it in a point of $I_q(P-q)$ other than $s$. 
By Corollary \ref{cor:tangentlinebound}, there are at most two lines through $q$ that are tangent to $C$. 
In terms of $P$, 
this means that at most two points $r\in P\cap C''$ have the property that the line spanned by $r$ and $q$ does not intersect $C''$ in a point of $P$ other than $r$.
By the previous two paragraphs, this implies that there are at most two $r\in P\cap C''$ for which we do not have $ol_r\left(I_r(P-r)\right) \geq n-O(1)$.
By essentially the same counting as in \eqref{eq:halfnandn}, we get
$oc(P)\ge\frac{1}{4}n^2-O(n)$. 
 This finishes the proof of Theorem \ref{thm:main} for even values of $n$.


\section{Odd point sets} 

In this section, we prove Theorem \ref{thm:main} for odd values of $n$. 
The proof uses similar arguments and a similar case structure as in Section \ref{sec:even}, but in a different order.
Let $n$ be an odd positive integer and suppose $P$ is a set of $n$ points in $\mathbb{R}^2$ which are not contained in a line or a circle. 

\bigskip\noindent
{\bf Case I:} {\it For every $p \in P$, we have $ol_p(I_p(P-p))\ge \frac {3}{4}n -O(1)$.}\\
We get the desired bound by the same calculation \eqref{eq:allthreefourth} as in Case I in Section \ref{sec:even}.

\bigskip\noindent
{\bf Case II:} {\it  There exists $q \in P$ for which we do not have $ol_q(I_q(P-q))\geq \frac {3}{4}n -O(1)$.}\\
Since $|I_q(P-q)|=n-1$ is even,
Theorem \ref{thm:addrem} implies that 
 $I_q(P-q)$ is type $(d)$, i.e., it is a B\"or\"oczky example.
 It has $\frac{1}{2}(n-1)$ points on a line $L$ and the same number of points on a disjoint conic $C$. 

\bigskip\noindent
{\bf Subcase II(a):} {\it The point $q$ lies on the line $L$.}\\
We deduce that $P$ consists of $\frac{1}{2}(n-1) + 1$ points on the line $L$ and $\frac{1}{2}(n-1)$ or $\frac{1}{2}(n-1)+1$ points on an algebraic curve $C'$ of degree at most $4$,  with $L $ and $C'$ disjoint or intersecting only in $q$. 
Fix $p\in P\cap C'$, not equal to $1$,  and apply $I_p$ to $P-p$. 
Then $I_p(P-p)$ consists of $\frac{1}{2}(n-1)+1$ points on the circle $I_p(L)$, and $\frac{1}{2}(n-1) -1$ or $\frac{1}{2}(n-1)$ points on the algebraic curve $I_p(C'-p)$ of degree at most $8$. 
Note that $p$ lies on the circle $I_p(L)$.
The set $I_p(P-p)$ cannot be type $(d)$ since it has a different number of points on the two curves,
it cannot be type $(e)$ because $p$ lies on the curve with more points, 
and it cannot be type $(f)$ since all its points lie on the two curves.
Thus, by Theorem \ref{thm:addrem}, we have $ol_p(I_p(P-p))\ge n-O(1)$.
On the other hand, by the same theorem, for every $p \in P\cap L$ we have $ol_p(I_p(P-p))\ge \frac{1}{2}n-O(1)$.
As a result, using the calculation \eqref{eq:halfnandn} from Section \ref{sec:even}, we get $oc(P)\ge \frac{1}{4}n^2-O(n)$, which completes the proof in the first subcase.

\bigskip\noindent
{\bf Subcase II(b):} {\it The point $q$ lies on the conic $C$.}\\
The set $P$ has $\frac{1}{2}(n-1)+1$ points  on a circle $S$ and $\frac{1}{2}(n-1)$ or $\frac{1}{2}(n-1)+1$ points on an algebraic curve $C''$ of degree at most $4$, with $S$ and $C''$ disjoint or intersecting only in $q$.
If for every point $p \in P\cap C''$, not equal to $q$, we have $ol_p(I_p(P-p)) \ge n-O(1)$, then again by the calculation \eqref{eq:halfnandn}, we get $oc(P)\ge\frac{1}{4}n^2-O(n)$.
Thus we can assume that there exists $r \in P\cap C''$, not equal to $q$, 
for which $ol_r(I_r(P-r)) \geq n-O(1)$ does not hold.
This immediately implies that $S$ and $C''$ are disjoint, and that $C''$ contains $\frac{1}{2}(n-1)$ points of $P$.
We will show that, given this assumption, for all but two points $p\in P$ we have 
$ol_p(I_p(P-p)) \ge \frac{3}{4}n-O(1)$.
By the calculation \eqref{eq:allthreefourth}, this gives the required bound on $oc(P)$ and completes the proof.

The set $I_r(P-r)$ consists of $\frac{1}{2}(n-1)+1$ points on the circle $I_r(S)$ and $\frac{1}{2}(n-1)-1$ points on the algebraic curve $I_r(C''-r)$. 
Therefore, $I_r(P-r)$ cannot be type $(d)$, 
so Theorem \ref{thm:addrem} tells us that $ol_r(I_r(P-r)) =\frac{3}{4}n-O(1)$.
Moreover, $I_r(C''-r)+r$ must be a line, which implies that $C''$ is also a line. 
Since $I_r(P-r)$ has no point outside the two curves, it cannot be type $(f)$, so it must be type $(e)$.
Thus $I_r(P-r)$ is projectively equivalent to $X_{n+1}$ minus two points, one of which is $r$.

The same reasoning shows that for every $p \in P\cap C''$,
$I_p(P-p)$ is not type $(d)$, 
so we have $ol_p(I_p(P-p)) \ge \frac{3}{4}n-O(1)$ for every $p \in P\cap C''$.

Finally, we show that there are at most two $s\in P\cap S$ for which we do not have 
$ol_s(I_s(P-s)) \geq \frac{3}{4}n-O(1)$.
Suppose that $s\in P\cap S$ does not have $ol_s(I_s(P-s)) \geq \frac{3}{4}n-O(1)$.
Then by Theorem \ref{thm:addrem},
$I_s(P-s)$ is type $(d)$, 
so it is a B\"or\"oczky example consisting of $\frac{1}{2}(n-1)$ points on the line $I_s(S-s)$ and $\frac{1}{2}(n-1)$ points on the disjoint circle $I_s(C'')+s$.
The structure of the B\"or\"oczky example implies that the line spanned by $s\in I_s(C'')+s$ and $I_s(r)\in I_s(C'')$ does not pass through any points of $I_s(P-s)$ other than $I_s(r)$, which implies that the line spanned by $s\in S$ and $r\in C''$ does not intersect $P$ in any other point.
On the other hand, looking back at the point set $I_r(P-r)$, the line spanned by $I_r(s)$ and $r$ is either tangent to the circle $I_r(S)$ at $I_r(s)$, or it passes through a point of $I_r(P-r)$ other than $I_r(s)$.
By Corollary \ref{cor:tangentlinebound}, since there are at most two such tangent lines, there are at most two such points $s$. 

This finishes the proof for the odd case.


\end{document}